\newtheorem{theorem}{Theorem}[section]
\newtheorem{lemma}[theorem]{Lemma}
\theoremstyle{definition}
\newtheorem{corollary}[theorem]{Corollary}
\newtheorem{example}[theorem]{Example}
\theoremstyle{remark}
\newtheorem{remark}[theorem]{Remark}
\numberwithin{equation}{section}
\begin{document}

\title{Automorphisms of the $k$-algebra $k[X_1,...,X_m]$}

\author{Alina Petrescu-Nita}
     
\address{University Politehnica of Bucharest, Department of Mathematics, Splaiul Independentei 313, Bucharest, Romania}
\email{nita alina@yahoo.com}

\author{Mihai D. Staic}
\address{Department of Mathematics and Statistics, Bowling Green State University, Bowling Green, OH 43403 } \address{Institute of Mathematics of the Romanian Academy, PO.BOX 1-764, RO-70700 Bu\-cha\-rest, Romania.}

\email{mstaic@gmail.com, mstaic@bgsu.edu}




\subjclass[2010]{Primary  12D99}
\date{January 1, 1994 and, in revised form, June 22, 1994.}


\keywords{polynomial, Jacobian}

\begin{abstract} For a field $k$ of characteristic $0$, we present an algorithm for deciding if a morphism  $\phi:k[X_1,...,X_m]\to k[X_1,...,X_m]$  has an inverse. The algorithm also shows how to find the inverse when it exists. 
\end{abstract}

\maketitle


%

\section{Introduction}

A morphism of $k$-algebras $\phi:k[X]\to k[X]$ is an automorphism if and only if $\phi(X)=aX+b$ with $a\neq 0$, in this case the inverse is determined by $\phi^{-1}(X)=a^{-1}(X-b)$. Once we switch to several variables the problem is much more complicated. If $\phi:k[X_1,...,X_m]\to k[X_1,...,X_m]$ is an automorphism and $\phi(X_i)=F_i(X_1,...,X_m)$, then the determinant of the Jacobian matrix $Jac(F_1,...,F_m)$ is a nonzero element in $k$. The converse of this statement is known as the Jacobian conjecture. This problem is open and was intensively studied over the years, and it is infamous for several incorrect proofs proposed. For some partial results and reduction to simpler cases see \cite{b}, \cite{cd}, \cite{d}, \cite{w} and \cite{w1}.

The purpose of this paper is to present an algorithm for deciding if a $k$-algebra morphism $\phi:k[X_1,...,X_m]\to k[X_1,...,X_m]$ is invertible and to show how to  find the inverse.   We are not addressing the question whether the Jacobian conjecture is true or not. We only show how to decide if a particular $\phi$ is invertible, and if it is invertible,  how to find the inverse.

The paper is organized as follows. In section 2 we recall a few general results, most importantly Theorem \ref{grad} from \cite{b}. In section 3 we discuss  the case of two variables. We show that given a pair $F(X,Y)$, $G(X,Y)$, such that $Jac(F,G)$ is a nonzero element of $k$, then the system (\ref{eq1}) has a  unique solution in $k[X,Y][[t]]$. If $\phi$ is invertible then this solution is in $k[X,Y][t]$. After evaluating  at $t=1$ we get the inverse of $\phi$. In section 4 we state the results for the general case, and point the main differences from the case $m=2$.

\section{Preliminaries}

In this paper $k$ is field with $char(k)=0$. $k[X_1,...,X_m]$ is the $k$-algebra of polynomials over $k$, when $m=2$ we denote it with $k[X,Y]$. If $R$ is a ring then $R[[t]]$ is the ring of formal series with coefficients in $R$. 

Fix an integer  $m\geq 2$. Suppose that for every $1\leq i\leq m$ we have  $F_i\in k[X_1,...,X_m]$, then we can define a $k$-algebra morphism  $$\phi:k[X_1,...,X_m]\to k[X_1,...,X_m],$$ determined by $\phi(X_i)=F_i$ for all $1\leq i\leq m$. 

Recall that the Jacobian matrix associated to the $m$-tuple $(F_1,...,F_m)$ as the $m\times m$ matrix:
\begin{eqnarray*}
Jac(F_1,...,F_m)= \begin{pmatrix}
\frac{\partial{F_1}}{\partial{X_1}}(X_1,...,X_m)&...&\frac{\partial{F_1}}{\partial{X_m}}(X_1,...,X_m)\vspace{1mm}\\
\frac{\partial{F_2}}{\partial{X_1}}(X_1,...,X_m)&...&\frac{\partial{F_2}}{\partial{X_m}}(X_1,...,X_m)\vspace{1mm}\\
.&...&.\\
\frac{\partial{F_m}}{\partial{X_1}}(X_1,...,X_m)&...&\frac{\partial{F_m}}{\partial{X_m}}(X_1,...,X_m)\vspace{1mm}
 \end{pmatrix}.
\end{eqnarray*}
It is well know that if $\phi$ has an inverse then $det(Jac(F_1,...,F_m))\in k^*$. The converse of this statement is the well known Jacobian conjecture. 

If $F\in k[X_1,...,X_m]$ we denote by $deg(F)$ the maximum total degree of all mo\-no\-mi\-al that appears in $F$.  We denote by $deg(F_1,...,F_m)=max\{deg(F_1),...,deg(F_m)\}$. If $\phi:k[X_1,...,X_m]\to k[X_1,...,X_m]$, we denote by $deg(\phi)=deg(\phi(X_1),...,\phi(X_m))$. 
Recall from \cite{b} the following result.

\begin{theorem} (\cite{b}) If $\phi: k[X_1,...,X_m]\to k[X_1,...,X_m]$ is an automorphism then $deg(\phi^{-1})\leq deg(\phi)^{m-1}$. 
\label{grad}
\end{theorem}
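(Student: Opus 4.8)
The plan is to establish the bound by a geometric degree count, in the spirit of Gabber's argument. Write $d=\deg(\phi)$ and let $F_i=\phi(X_i)$, so $\deg F_i\le d$; let $\psi=\phi^{-1}$, put $G_j=\psi(X_j)$, and set $e=\deg(\psi)$, the quantity to be bounded. Since total degrees are unchanged under extension of scalars, I would first pass to the algebraic closure $\bar k$ and view $\phi,\psi$ as mutually inverse automorphisms of $\mathbb{A}^m_{\bar k}$. The whole argument then rests on a single auxiliary curve: the preimage $C=\phi^{-1}(\ell)$ of a generic affine line $\ell\subset\mathbb{A}^m$ in the target.

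First I would obtain an \emph{upper} bound for $\deg C$. Writing the generic line as $\ell=\bigcap_{i=1}^{m-1}H_i$, an intersection of $m-1$ hyperplanes $H_i=\{a^{(i)}\cdot Y=c_i\}$, bijectivity of $\phi$ gives
\[
C=\phi^{-1}(\ell)=\bigcap_{i=1}^{m-1}\phi^{-1}(H_i),\qquad \phi^{-1}(H_i)=\Big\{x:\textstyle\sum_{j}a^{(i)}_j F_j(x)=c_i\Big\}.
\]
Each $\phi^{-1}(H_i)$ is a hypersurface cut out by a polynomial of degree $\le\max_j\deg F_j=d$. Because $\phi$ is an isomorphism, $C\cong\ell$ is an irreducible affine curve, so these $m-1$ hypersurfaces meet properly, in the expected dimension $1$. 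The affine B\'ezout inequality then yields $\deg C\le d^{\,m-1}$.

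Second I would compute $\deg C$ \emph{exactly} and show it equals $e$. Parametrize $\ell$ as $s\mapsto c+sv$ with $c,v$ generic, so that $C$ is the image of the parametrization $s\mapsto\psi(c+sv)=(G_1(c+sv),\dots,G_m(c+sv))$, which is injective, hence birational onto $C$, since $\psi$ is bijective. The degree of $C$ is the number of points in which a generic hyperplane $\{b\cdot X=c_0\}$ of the source meets it, i.e. the number of roots $s$ of $p(s)=\sum_j b_j\,G_j(c+sv)-c_0$. Each $G_j(c+sv)$ has $s$-degree exactly $\deg G_j$, with leading coefficient the top-degree homogeneous form of $G_j$ evaluated at $v$; choosing an index $j_0$ with $\deg G_{j_0}=e$ and taking $v,b$ generic makes the coefficient of $s^e$ in $p$ nonzero, so $p$ has degree $e$ and, for generic $c_0$, exactly $e$ distinct roots. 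By injectivity these yield $e$ distinct points of $C$, whence $\deg C=e$. Combining the two steps gives $e=\deg C\le d^{\,m-1}$, which is the assertion.

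The main obstacle is the rigorous justification of the B\'ezout step: one must ensure that the $m-1$ hypersurfaces $\phi^{-1}(H_i)$ intersect in the expected dimension with no excess or embedded components, and control the behavior of the projective closure $\overline{C}$ at infinity so that the classical B\'ezout bound in $\mathbb{P}^m$ descends to the affine inequality $\deg C\le d^{\,m-1}$. Properness of the intersection is automatic here through the identity $C\cong\ell$, but stating the degree inequality cleanly --- for instance via the affine B\'ezout inequality of Heintz, or by homogenizing the $\phi^{-1}(H_i)$ and checking that a generic line section avoids the finite locus at infinity --- is the step demanding the most care.
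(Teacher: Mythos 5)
The paper does not prove Theorem \ref{grad} at all --- it quotes it from \cite{b} --- and your argument is precisely the degree-theoretic proof (due to Gabber) recorded in that reference: bound the degree of the preimage $C=\phi^{-1}(\ell)$ of a generic line by the affine B\'ezout inequality applied to the $m-1$ hypersurfaces $\sum_j a^{(i)}_j F_j = c_i$, then identify $\deg C$ with $\deg(\phi^{-1})$ via the parametrization $s\mapsto\psi(c+sv)$. The argument is correct as written, and the one point you flag --- that the B\'ezout step needs the cumulative-degree (Heintz/refined B\'ezout) inequality rather than classical proper-intersection B\'ezout in $\mathbb{P}^m$ --- is indeed the only place requiring care.
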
 

For more results about Jacobian conjecture see \cite{b}, \cite{cd}, \cite{d}, \cite{w} and \cite{w1}.

\section{Main Result}
In this section we study the case of two variables. First we make the following essential observation.  

\begin{lemma}  
Let $F(X,Y)$, $G(X,Y)\in k[X,Y]$, assume that there exist two polynomials $A(X,Y)$ and $B(X,Y)\in k[X,Y]$ such that 
\begin{equation}
 \left \{
  \begin{aligned}
   A(F(X,Y),G(X,Y))=X,\\ 
B(F(X,Y),G(X,Y))=Y, \\
F(A(X,Y),B(X,Y))=X,\\
G(A(X,Y),B(X,Y))=Y. 
  \end{aligned} \right.\label{eqinv}
\end{equation} 
Then there exist  $\mathcal{X}(t)$ and $\mathcal{Y}(t)\in k[X,Y][t]$ such that
\begin{eqnarray*}
&F(\mathcal{X}(t),\mathcal{Y}(t))=tX+(1-t)F(X,Y),&\\
&G(\mathcal{X}(t),\mathcal{Y}(t))=tY+(1-t)G(X,Y),&
\end{eqnarray*}
$\mathcal{X}(0)=X$, $\mathcal{Y}(0)=Y$, $\mathcal{X}(1)=A(X,Y)$ and $\mathcal{Y}(1)=B(X,Y)$. Moreover the $t$-degree for $\mathcal{X}(t)$ and $\mathcal{Y}(t)$ is less or equal to  the maximum of $deg(F(X,Y))$ and $deg(G(X,Y))$. 
\label{lemma1}
\end{lemma}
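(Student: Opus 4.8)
The plan is to build the homotopy explicitly by pulling the straight-line path in the target back through the inverse. Set $P(t)=tX+(1-t)F(X,Y)$ and $Q(t)=tY+(1-t)G(X,Y)$, both elements of $k[X,Y][t]$ that are of $t$-degree one, and then simply define
$$\mathcal{X}(t)=A(P(t),Q(t)),\qquad \mathcal{Y}(t)=B(P(t),Q(t)).$$

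Verifying the two functional equations is then a matter of substitution. The third and fourth equations of (\ref{eqinv}), read as polynomial identities $F(A(U,V),B(U,V))=U$ and $G(A(U,V),B(U,V))=V$ in the variables $U,V$, remain valid when the elements $P(t),Q(t)$ are substituted for $U,V$, because an identity in $k[X,Y]$ survives substitution of arbitrary elements of any $k$-algebra. Hence $F(\mathcal{X}(t),\mathcal{Y}(t))=P(t)=tX+(1-t)F(X,Y)$ and likewise $G(\mathcal{X}(t),\mathcal{Y}(t))=Q(t)=tY+(1-t)G(X,Y)$. For the boundary values, $P(1)=X$ and $Q(1)=Y$ give $\mathcal{X}(1)=A(X,Y)$ and $\mathcal{Y}(1)=B(X,Y)$ at once, while $P(0)=F$ and $Q(0)=G$ together with the first two equations of (\ref{eqinv}) give $\mathcal{X}(0)=A(F,G)=X$ and $\mathcal{Y}(0)=B(F,G)=Y$.

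It remains to bound the $t$-degree. Substituting the $t$-linear polynomials $P,Q$ into a polynomial of total degree $d$ yields a polynomial whose $t$-degree is at most $d$, so the $t$-degrees of $\mathcal{X}(t)$ and $\mathcal{Y}(t)$ are at most $deg(A)$ and $deg(B)$ respectively. This is the point where the hypothesis must be converted into a usable estimate: the existence of $A,B$ satisfying (\ref{eqinv}) says precisely that $\phi$ is an automorphism whose inverse $\psi$ is given by $\psi(X)=A$ and $\psi(Y)=B$. Applying Theorem \ref{grad} with $m=2$ then gives $max\{deg(A),deg(B)\}=deg(\psi)\leq deg(\phi)=max\{deg(F),deg(G)\}$, which combines with the previous estimate to yield the claimed bound.

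The step I expect to be the crux is this last one: the construction by itself only bounds the $t$-degree by $deg(A)$ and $deg(B)$, which are a posteriori quantities, and it is Theorem \ref{grad} (specialized to two variables, where the exponent $m-1$ equals $1$) that converts this into a bound purely in terms of $F$ and $G$. Everything else is formal verification; the only subtlety worth stating carefully is the legitimacy of substituting $P(t),Q(t)$ into the compositional identities of (\ref{eqinv}).
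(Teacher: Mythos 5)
Your proposal is correct and follows essentially the same route as the paper: define $\mathcal{X}(t)=A(tX+(1-t)F,tY+(1-t)G)$, $\mathcal{Y}(t)=B(\cdot,\cdot)$, verify the identities and boundary values by substitution into (\ref{eqinv}), and bound the $t$-degree by $\max\{deg(A),deg(B)\}$ and then by $\max\{deg(F),deg(G)\}$ via Theorem \ref{grad} with $m=2$. The paper's proof is the same argument, only without the intermediate notation $P(t),Q(t)$.
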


\begin{proof}
We define $\mathcal{X}(t)$ and $\mathcal{Y}(t)\in k[X,Y][t] (\subseteq k[X,Y][[t]])$ determined by:
\begin{eqnarray*}
&\mathcal{X}(t)=A(tX+(1-t)F(X,Y),tY+(1-t)G(X,Y)),&\\
&\mathcal{Y}(t)=B(tX+(1-t)F(X,Y),tY+(1-t)G(X,Y)).&
\end{eqnarray*}
First by (\ref{eqinv}) we have $F(\mathcal{X}(t),\mathcal{Y}(t))=F(A(tX+(1-t)F(X,Y),tY+(1-t)G(X,Y)), B(tX+(1-t)F(X,Y),tY+(1-t)G(X,Y)))=tX+(1-t)F(X,Y)$, and similarly $G(\mathcal{X}(t),\mathcal{Y}(t))=tY+(1-t)G(X,Y)$. 

Next we have that $\mathcal{X}(0)=A(0X+(1-0)F(X,Y),0Y+(1-0)G(X,Y))=A(F(X,Y),G(X,Y))=X$.
Similarly $\mathcal{Y}(0)=Y$, $\mathcal{X}(1)=A(X,Y)$ and $\mathcal{Y}(1)=B(X,Y)$. 

Finally, the $t$-degree for $\mathcal{X}(t)$ (and $\mathcal{Y}(t)$) is less or equal to the maximum of  $deg(A(X,Y)$ and  $deg(B(X,Y))$,  which by Theorem \ref{grad} is less or equal to the maximum of $deg(F(X,Y))$ and $deg(G(X,Y))$.
\end{proof}

This suggest that, given $F(X,Y)$ and $G(X,Y)\in k[X,Y]$, in order to find the inverse polynomial functions $A(X,Y)$ and $B(X,Y)$ we need to solve the system of equations
\begin{equation}
 \left \{
  \begin{aligned}
   F(\mathcal{U}(t),\mathcal{V}(t))=tX+(1-t)F(X,Y),\\
G(\mathcal{U}(t),\mathcal{V}(t))=tY+(1-t)G(X,Y),
  \end{aligned} \right.\label{eq1}
\end{equation} 
with the initial conditions
\begin{equation}
 \left \{
  \begin{aligned}
 \mathcal{U}(0)=X\\
\mathcal{V}(0)=Y
  \end{aligned} \right.\label{eq1c}
\end{equation} 
and then "evaluate" the solution at $t=1$. Unfortunately this is not quite true, but we have the following result. 
\begin{theorem} Let $F(X,Y)$ and $G(X,Y)\in k[X,Y]$  such that the determinant  of the jacobian matrix of the pair $(F(X,Y), G(X,Y))$ is in $k^*$. Then the system (\ref{eq1}) with initial condition 
(\ref{eq1c}), has a unique solution $\mathcal{U}(t)$, $\mathcal{V}(t)\in k[X,Y][[t]]$.  \label{main}
\end{theorem}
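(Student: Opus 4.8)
The plan is to solve the system (\ref{eq1}) by the method of formal power series in $t$, writing $\mathcal{U}(t)=\sum_{n\geq 0}U_n t^n$ and $\mathcal{V}(t)=\sum_{n\geq 0}V_n t^n$ with $U_n,V_n\in k[X,Y]$, and determining the coefficients recursively. The initial conditions (\ref{eq1c}) fix $U_0=X$ and $V_0=Y$. Substituting these series into the two equations of (\ref{eq1}) and expanding $F$ and $G$ as power series in $(\mathcal{U}-X)$ and $(\mathcal{V}-Y)$ via the (formal, finite, since $F,G$ are polynomials) Taylor expansion, I would match coefficients of $t^n$ on both sides for each $n\geq 1$ and show that this determines $U_n,V_n$ uniquely in terms of the previously computed $U_0,\dots,U_{n-1},V_0,\dots,V_{n-1}$.

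First I would compute the $t^1$-coefficient. Differentiating both equations of (\ref{eq1}) with respect to $t$ and evaluating at $t=0$, the chain rule gives a linear system whose coefficient matrix is exactly the Jacobian matrix $Jac(F,G)$ evaluated at $(U_0,V_0)=(X,Y)$. The right-hand sides are $X-F(X,Y)$ and $Y-G(X,Y)$. Because $\det Jac(F,G)\in k^*$ by hypothesis, this matrix is invertible over $k[X,Y]$ (its inverse being the classical adjugate divided by the nonzero scalar determinant), so $U_1,V_1$ exist and are uniquely determined. The key point I want to isolate is that the \emph{same} invertible Jacobian matrix, evaluated at $(X,Y)$, governs the leading linear part at every stage of the recursion.

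The main step is the inductive one. Assume $U_0,\dots,U_{n-1}$ and $V_0,\dots,V_{n-1}$ have been found uniquely for some $n\geq 1$. I would extract the coefficient of $t^n$ from each equation of (\ref{eq1}). Using the Taylor expansion of $F$ around $(X,Y)$, the coefficient of $t^n$ splits into a linear part, namely $\frac{\partial F}{\partial X}(X,Y)\,U_n+\frac{\partial F}{\partial Y}(X,Y)\,V_n$ (and the analogous expression with $G$), plus a remainder that involves only products of the $U_j,V_j$ with $j<n$ together with the higher partial derivatives of $F$ and $G$. Since everything in the remainder is already known, the $t^n$-coefficient equations form a linear system in the two unknowns $U_n,V_n$ whose coefficient matrix is again $Jac(F,G)$ evaluated at $(X,Y)$. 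Invertibility of this matrix over $k[X,Y]$, guaranteed once more by $\det Jac(F,G)\in k^*$, yields $U_n$ and $V_n$ uniquely. This closes the induction and establishes both existence and uniqueness of the solution in $k[X,Y][[t]]$.

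The main obstacle I anticipate is the bookkeeping in the inductive step: writing down precisely the remainder term coming from the multivariate Taylor/Fa\`a di Bruno expansion of $F(\mathcal{U}(t),\mathcal{V}(t))$ and verifying that it genuinely involves only $U_j,V_j$ with $j<n$ and no $U_n,V_n$ except through the stated linear term. The cleanest way to handle this is to observe that the coefficient of $t^n$ in $F(\mathcal{U}(t),\mathcal{V}(t))$, when computed by expanding $F$ as a polynomial in $(\mathcal{U}-X)$ and $(\mathcal{V}-Y)$, picks up $U_n$ or $V_n$ only from the degree-one terms of that expansion, since any degree-$\geq 2$ monomial in $(\mathcal{U}-X),(\mathcal{V}-Y)$ contributes to the $t^n$-coefficient only through products of coefficients $U_j,V_j$ whose indices are all at least $1$ and sum to $n$, hence each strictly less than $n$. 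Once this separation is made rigorous, the uniqueness of the linear solve does the rest; the whole argument rests entirely on the single hypothesis that $\det Jac(F,G)$ is a nonzero scalar.
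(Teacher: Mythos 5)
Your proof is correct and follows essentially the same route as the paper: both arguments determine the coefficients of $\mathcal{U}(t)$ and $\mathcal{V}(t)$ recursively, observing that at each order $n$ the unknowns $U_n,V_n$ enter only through a linear system whose matrix is $Jac(F,G)$ evaluated at $(X,Y)$, invertible over $k[X,Y]$ because its determinant lies in $k^*$. The only difference is bookkeeping: you extract the coefficient of $t^n$ via the Taylor expansion of $F,G$ about $(X,Y)$, while the paper differentiates the system $n$ times in $t$ and evaluates at $t=0$, which amounts to the same computation.
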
 
\begin{proof} 
We are looking for 
$$\mathcal{U}(t)=\sum_{i\geq 0}u_i(X,Y)t^i, \; \; \mathcal{V}(t)=\sum_{i\geq 0}v_i(X,Y)t^i,$$ 
where $u_i(X,Y)$, $v_i(X,Y)\in k[X,Y]$. The plan is to show that for every $n\geq 1$, the system (\ref{eq1}) with initial condition (\ref{eq1c})  has a unique solution $mod\;t^n$. Moreover the solution $mod\;t^{n+1}$ is the extension of the solution  $mod\;t^n$.  

Because of the initial condition (\ref{eq1c}), we know that $u_0(X,Y)=X$ and $v_0(X,Y)=Y$ and so we have a unique solution $mod\;t$. 

Next we take the derivative of the equations (\ref{eq1}) to get: 

\begin{equation}
 \left \{
  \begin{aligned}
\frac{\partial{F}}{\partial{X}}(\mathcal{U}(t),\mathcal{V}(t))\mathcal{U}'(t)+\frac{\partial{F}}{\partial{Y}}(\mathcal{U}(t),\mathcal{V}(t))\mathcal{V}'(t)=X-F(X,Y),\vspace{2mm}\\
\frac{\partial{G}}{\partial{X}}(\mathcal{U}(t),\mathcal{V}(t))\mathcal{U}'(t)+\frac{\partial{G}}{\partial{Y}}(\mathcal{U}(t),\mathcal{V}(t))\mathcal{V}'(t)=Y-G(X,Y).
  \end{aligned} \right.\label{eq2}
\end{equation} 
We  can rewrite it as 
\begin{equation}
D_1(\mathcal{U}(t),\mathcal{V}(t))Z_{1,1}(X,Y)(t)=\begin{pmatrix}
  X-F(X,Y)\\
  Y-G(X,Y)
 \end{pmatrix}\label{eqm1}
\end{equation}
 where 
\begin{eqnarray*}
D_1(X,Y)= \begin{pmatrix}
\frac{\partial{F}}{\partial{X}}(X,Y)&\frac{\partial{F}}{\partial{Y}}(X,Y)\vspace{1mm}\\
\frac{\partial{G}}{\partial{X}}(X,Y)&\frac{\partial{G}}{\partial{Y}}(X,Y)
 \end{pmatrix},
\end{eqnarray*}
\begin{eqnarray*}
 Z_{1,1}(X,Y)(t)= \begin{pmatrix}
 \mathcal{U}'(t)\\
  \mathcal{V}'(t)
 \end{pmatrix}.
\end{eqnarray*}
Evaluating the equation (\ref{eqm1}) at $t=0$ we get:
\begin{eqnarray*}
D_1(X,Y)\begin{pmatrix}
 u_1(X,Y)\\
  v_1(X,Y)
 \end{pmatrix}=\begin{pmatrix}
  X-F(X,Y)\\
  Y-G(X,Y)
 \end{pmatrix}
\end{eqnarray*}
Since the Jacobian matrix has its determinant in $k^*$, we have that  $D_1(X,Y)$ has an inverse  in $M_2(k[X,Y])$. This implies that $Z_{1,1}(X,Y)(0)=\begin{pmatrix}
 u_1(X,Y)\\
  v_1(X,Y)
 \end{pmatrix}$ is uniquely determined, which proves that the system (\ref{eq1})  with initial condition (\ref{eq1c}) has a unique solution $mod\;t^2$. 

Next we take the second derivative of the system (\ref{eq1}) (or equivalently the derivative of (\ref{eqm1})) to get
\begin{equation}
D_1(\mathcal{U}(t), \mathcal{V}(t))Z_{1,2}(X,Y)(t)+D_2(\mathcal{U}(t), \mathcal{V}(t))Z_{2,2}(X,Y)(t)=0.\label{eqm2}
\end{equation}
Where 
\begin{eqnarray*}D_2(X,Y)= \begin{pmatrix}
\frac{\partial^2{F}}{\partial{X^2}}(X,Y)&\frac{\partial^2{F}}{\partial{X}\partial{Y}}(X,Y)&\frac{\partial^2{F}}{\partial{Y^2}}(X,Y) \vspace{1mm}\\
\frac{\partial^2{G}}{\partial{X^2}}(X,Y)&\frac{\partial^2{G}}{\partial{X}\partial{Y}}(X,Y)&\frac{\partial^2{G}}{\partial{Y^2}}(X,Y)
 \end{pmatrix},
\end{eqnarray*}

\begin{eqnarray*}Z_{1,2}(X,Y)(t)= \begin{pmatrix}
 \mathcal{U}''(t)\\
  \mathcal{V}''(t)
 \end{pmatrix},\; 
Z_{2,2}(X,Y)(t)= \begin{pmatrix}
(\mathcal{U}'(t))^2\\
  2\mathcal{U}'(t)\mathcal{V}'(t)\\
  (\mathcal{V}'(t))^2
 \end{pmatrix}.
\end{eqnarray*}
We evaluate the equation (\ref{eqm2}) at $t=0$ to get 
\begin{equation}
D_1(X,Y)Z_{1,2}(X,Y)(0)+D_2(X,Y)Z_{2,2}(X,Y)(0)=0.\label{eqm20}
\end{equation}
Notice that $Z_{2,2}(X,Y)(0)$ depends only on $u_1(X,Y)$ and $v_1(X,Y)$. Since $D_1(X,Y)$ is invertible we can solve uniquely equation (\ref{eqm20}) for $Z_{1,2}(X,Y)(0)= \begin{pmatrix}
 u_2(X,Y)\\
  v_2(X,Y)
 \end{pmatrix}$, which proves that the system (\ref{eq1})  with initial condition (\ref{eq1c}) has a unique solution $mod\;t^3$.

Next we do induction. Assume that $mod\;t^n$ we have a unique solution for the system (\ref{eq1})  with initial condition (\ref{eq1c}). Take the $n$-th derivative of the system (\ref{eq1}) to get 
\begin{equation}
D_1(\mathcal{U}(t), \mathcal{V}(t))Z_{1,n}(X,Y)(t)+\sum_{i=2}^nD_i(\mathcal{U}(t), \mathcal{V}(t))Z_{i,n}(X,Y)(t)=0.\label{eqmn}
\end{equation} 
Where for $1\leq i\leq n$
\begin{eqnarray*}D_i(X,Y)= \begin{pmatrix}
\frac{\partial^i{F}}{\partial{X^i}}(X,Y)&\frac{\partial^i{F}}{\partial{X^{i-1}}\partial{Y}}(X,Y)&...&\frac{\partial^i{F}}{\partial{Y^i}}(X,Y) \vspace{1mm}\\
\frac{\partial^i{G}}{\partial{X^i}}(X,Y)&\frac{\partial^i{G}}{\partial{X^{i-1}}\partial{Y}}(X,Y)&...&\frac{\partial^i{G}}{\partial{Y^i}}(X,Y)
 \end{pmatrix}\in M_{2\times (i+1)}(k[X,Y]),
\end{eqnarray*}
and 
\begin{eqnarray*} 
&Z_{i,n}(X,Y)(t)= 
\mathcal{U}'(t)\begin{pmatrix}
Z_{i-1,n-1}(X,Y)(t)\\
  0
 \end{pmatrix}+
\mathcal{V}'(t)\begin{pmatrix}
0\\
Z_{i-1,n-1}(X,Y)(t)
 \end{pmatrix}+&\\
 &\frac{d}{dt}Z_{i,n-1}(X,Y)(t)\in M_{(i+1)\times 1}(k[X,Y][[t]]).&
\end{eqnarray*}
For convenience we use the convention that $Z_{i,n}(X,Y)(t)=0$ for all $i\leq 0$ and $i\geq n+1$. 
Notice that 
\begin{eqnarray*} 
Z_{1,n}(X,Y)(t)= 
\begin{pmatrix}
\mathcal{U}^{(n)}(t)\\
\mathcal{V}^{(n)}(t)
\end{pmatrix},
\end{eqnarray*} in particular 
\begin{eqnarray*} 
Z_{1,n}(X,Y)(0)= 
\begin{pmatrix}
u_n(X,Y)\\
v_n(X,Y)
\end{pmatrix}.
\end{eqnarray*}
Moreover, for every $2\leq i\leq n$ the entries of the matrix $Z_{i,n}(X,Y)(t)$ can be expressed as polynomials in  $\mathcal{U}^{(j)}(t)$ and $\mathcal{V}^{(j)}(t)$ for $j\in \{1,...,n-i+1\}$. In particular, when we evaluate at $t=0$, the entries of the matrix $Z_{i,n}(X,Y)(0)$ are polynomials in $u_j(X,Y)$ and $v_j(X,Y)$ for $j\in \{1,...,n-i+1\}\subseteq  \{1,...,n-1\}$. 
\begin{equation}
D_1(X,Y)Z_{1,n}(X,Y)(0)+\sum_{i=2}^nD_i(X, Y)Z_{i,n}(X,Y)(0)=0.\label{eqmn1}
\end{equation} 
Since $D_1(X,Y)$ is invertible, and we already know $u_j(X,Y)$ and $v_j(X,Y)$ for $j\in   \{1,...,n-1\}$, we can solve uniquely the equation (\ref{eqmn1}) for $Z_{1,n}(X,Y)(0)=\begin{pmatrix}
 u_n(X,Y)\\
  v_n(X,Y)
 \end{pmatrix}$, and so the system (\ref{eq1})  with initial condition (\ref{eq1c}) has a unique solution $mod\;t^{n+1}$.
\end{proof}

This result, combined with Theorem \ref{grad}, gives an algorithm of deciding if a $k$-algebra morphism $\phi:k[X,Y]\to k[X,Y]$ is invertible or not. If it is invertible then we also  know how to find the inverse. More precisely we have the following.

\begin{corollary}
 Let $F(X,Y)$ and $G(X,Y)\in k[X,Y]$ of total degree at most $n$ such that $det(Jac(F(X,Y), G(X,Y)))\in k^*$. Let $\phi,\; \tau:k[X,Y]\to k[X,Y]$ be morphisms of $k$-algebras determined by 
$$\phi(X)=F(X,Y), \; \; \phi(Y)=G(X,Y),$$ 
$$\tau(X)=X+\sum_{i=1}^n u_i(X,Y), \; \; \tau(Y)=Y+\sum_{i=1}^n v_i(X,Y),$$ 
with $u_i(X,Y)$ and $v_i(X,Y)$ determined as in the proof of Theorem \ref{main}. Then the morphism $\phi$ has a polynomial inverse if and only if $\phi\circ \tau=id_{k[X,Y]}=\tau\circ\phi$. \label{cor1}
\end{corollary}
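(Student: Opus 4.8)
The plan is to prove the two implications of the biconditional separately. The reverse implication will be essentially a tautology, while the forward implication carries all of the content and is where Lemma \ref{lemma1} and the uniqueness part of Theorem \ref{main} must be combined.

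For the reverse direction, suppose $\phi\circ\tau=id_{k[X,Y]}=\tau\circ\phi$. Then $\tau$ is by definition a two-sided inverse of $\phi$ among $k$-algebra morphisms, and since $\tau(X),\tau(Y)\in k[X,Y]$ by construction, the morphism $\phi$ has a polynomial inverse. Nothing further is needed here; this is just the observation that having an explicit two-sided inverse is the definition of being invertible.

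For the forward direction, I would begin from the assumption that $\phi$ has a polynomial inverse, given by the morphism sending $X\mapsto A(X,Y)$ and $Y\mapsto B(X,Y)$. The key observation is that saying this morphism is a two-sided inverse of $\phi$ is exactly the assertion that the four identities of the system (\ref{eqinv}) hold: $\phi\circ\psi=id$ unwinds to $A(F,G)=X$, $B(F,G)=Y$, while $\psi\circ\phi=id$ unwinds to $F(A,B)=X$, $G(A,B)=Y$. Thus the hypotheses of Lemma \ref{lemma1} are met, and applying that lemma produces polynomials $\mathcal{X}(t),\mathcal{Y}(t)\in k[X,Y][t]$ that solve the system (\ref{eq1}) with initial condition (\ref{eq1c}), that have $t$-degree at most $\max\{deg(F),deg(G)\}\le n$, and that satisfy $\mathcal{X}(1)=A(X,Y)$ and $\mathcal{Y}(1)=B(X,Y)$.

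The bridge to $\tau$ is the uniqueness assertion of Theorem \ref{main}. Since $k[X,Y][t]\subseteq k[X,Y][[t]]$, the pair $(\mathcal{X}(t),\mathcal{Y}(t))$ is a formal-power-series solution of (\ref{eq1})--(\ref{eq1c}), so by uniqueness it must coincide with the series solution $(\mathcal{U}(t),\mathcal{V}(t))$ whose coefficients are the $u_i,v_i$. Comparing coefficients then forces $u_i(X,Y)=v_i(X,Y)=0$ for $i>n$ and $\mathcal{X}(t)=\sum_{i=0}^n u_i(X,Y)t^i$, and likewise for $\mathcal{Y}$. Evaluating at $t=1$ and recalling $u_0=X$, $v_0=Y$, I obtain $\tau(X)=\sum_{i=0}^n u_i(X,Y)=\mathcal{X}(1)=A(X,Y)$ and similarly $\tau(Y)=B(X,Y)$. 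Hence $\tau$ agrees with the inverse of $\phi$ on the generators $X,Y$, which gives $\phi\circ\tau=id_{k[X,Y]}=\tau\circ\phi$.

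I expect the main subtlety to be the bookkeeping with the degree bounds rather than any heavy computation: one must check that truncating the series at $t^n$ in the definition of $\tau$ discards nothing, and this is precisely what the degree estimate in Lemma \ref{lemma1} guarantees (that estimate being itself a consequence of Theorem \ref{grad} specialized to $m=2$, so that $deg(A),deg(B)\le deg(\phi)=\max\{deg(F),deg(G)\}\le n$). Once uniqueness identifies the abstract formal solution with the genuine polynomial-in-$t$ solution arising from the true inverse, the identification $\tau=\phi^{-1}$ is immediate. It is worth emphasizing that the standing hypothesis $det(Jac(F,G))\in k^*$ is exactly what licenses the appeal to Theorem \ref{main} that defines the $u_i,v_i$ in the first place, so the construction of $\tau$ is unconditional while its being the inverse is what the forward direction establishes.
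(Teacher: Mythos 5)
Your proposal is correct and follows essentially the same route as the paper: invoke Lemma \ref{lemma1} to produce a polynomial-in-$t$ solution with $t$-degree at most $n$ and value $(A,B)$ at $t=1$, identify it with $(\mathcal{U}(t),\mathcal{V}(t))$ via the uniqueness in Theorem \ref{main}, and evaluate at $t=1$ to conclude $\tau=\phi^{-1}$, the converse being immediate. No gaps.
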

\begin{proof}
If $\phi$ has a polynomial inverse, then we know from  Lemma \ref{lemma1} that $(\mathcal{X}(t), \mathcal{Y}(t))$ is a solution for the system (\ref{eq1})  with initial condition (\ref{eq1c}). From Theorem \ref{main} the solution has to be unique, so $\mathcal{X}(t)=\mathcal{U}(t)$ and $\mathcal{Y}(t)=\mathcal{V}(t)$. 

Moreover from  Lemma \ref{lemma1} we know that the $t$-degree of $\mathcal{X}(t)$ and $\mathcal{Y}(t)$ is at most $n$ and so 
$\mathcal{X}(t)=X+\sum_{i=1}^n u_i(X,Y)t^i$ and $\mathcal{Y}(t)=Y+\sum_{i=1}^n v_i(X,Y)t^i$. In particular, using the notation from Lemma \ref{lemma1}, we have that $\phi^{-1}(X)=A(X,Y)=\mathcal{X}(1)=X+\sum_{i=1}^n u_i(X,Y)=\tau(X)$ and $\phi^{-1}(Y)=B(X,Y)=\mathcal{Y}(1)=Y+\sum_{i=1}^n v_i(X,Y)=\tau(Y)$. 

The converse is obvious. 
\end{proof}

The following result was first stated in \cite{b}, here we give a new proof. 
\begin{corollary} Let $k\subseteq L$ be an extension of fields of characteristic $0$, and $F(X,Y)$, $G(X,Y)\in k[X,Y]$. Assume that  there exist $A(X,Y)$, $B(X,Y)\in L[X,Y]$ such that the identities form (\ref{eqinv}) hold. Then $A(X,Y)$, $B(X,Y)\in k[X,Y]$. \label{kL}
\end{corollary}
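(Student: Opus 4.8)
The plan is to reduce everything to the field $k$ by showing that the hypotheses force $\det(Jac(F,G))\in k^*$, and then to identify $A(X,Y)$ and $B(X,Y)$ with the values at $t=1$ of the \emph{unique} power-series solution produced by Theorem \ref{main}, which a priori already lives over $k$.

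First I would observe that the four identities of (\ref{eqinv}) say precisely that the $k$-algebra morphism $\phi$, extended in the obvious way to $\phi_L:L[X,Y]\to L[X,Y]$ with $\phi_L(X)=F$, $\phi_L(Y)=G$, is an automorphism of $L[X,Y]$ with inverse $X\mapsto A$, $Y\mapsto B$. By the well-known fact recalled in Section 2 (applied over the field $L$), this gives $\det(Jac(F,G))\in L^*$. But $F,G\in k[X,Y]$, so $\det(Jac(F,G))$ is a polynomial with coefficients in $k$; the condition that it be a nonzero element of $L$ means it is a nonzero constant whose value, being the constant term of a polynomial over $k$, already lies in $k$. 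Hence $\det(Jac(F,G))\in k^*$, and Theorem \ref{main} applies: the system (\ref{eq1}) with initial condition (\ref{eq1c}) has a unique solution $\mathcal{U}(t),\mathcal{V}(t)\in k[X,Y][[t]]$.

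Next I would run the comparison over $L$. Since $\det(Jac(F,G))\in k^*\subseteq L^*$, Theorem \ref{main} also applies verbatim over $L$, so (\ref{eq1}) with (\ref{eq1c}) has a \emph{unique} solution in $L[X,Y][[t]]$; because the equations have coefficients in $k[X,Y]$, the pair $\mathcal{U}(t),\mathcal{V}(t)$ is such a solution when regarded inside $L[X,Y][[t]]$, so it is \emph{the} solution over $L$. On the other hand, Lemma \ref{lemma1} applied to the $L$-polynomials $A,B$ (which satisfy (\ref{eqinv})) produces $\mathcal{X}(t),\mathcal{Y}(t)\in L[X,Y][t]$ that also solve (\ref{eq1}) with (\ref{eq1c}) and satisfy $\mathcal{X}(1)=A(X,Y)$, $\mathcal{Y}(1)=B(X,Y)$. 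By $L$-uniqueness we conclude $\mathcal{X}(t)=\mathcal{U}(t)$ and $\mathcal{Y}(t)=\mathcal{V}(t)$. Finally, since $\mathcal{X}(t),\mathcal{Y}(t)$ are \emph{polynomials} in $t$, so are $\mathcal{U}(t),\mathcal{V}(t)$, whence $\mathcal{U}(t),\mathcal{V}(t)\in k[X,Y][t]$; evaluating at $t=1$ (now legitimate, as the $t$-degree is finite) gives $A(X,Y)=\mathcal{U}(1)\in k[X,Y]$ and $B(X,Y)=\mathcal{V}(1)\in k[X,Y]$, as desired.

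The step I expect to be the main obstacle is the bookkeeping that makes the evaluation at $t=1$ meaningful: one cannot in general substitute $t=1$ into a formal power series, so the whole argument hinges on first establishing that the series solution has finite $t$-degree. This is exactly where the descent is won — uniqueness over the larger field $L$ forces the series solution $\mathcal{U},\mathcal{V}$ (built over $k$) to coincide with the honest $L$-polynomials $\mathcal{X},\mathcal{Y}$ coming from Lemma \ref{lemma1}, so both are polynomials in $t$ over $k[X,Y]$ and the evaluation lands back in $k[X,Y]$.
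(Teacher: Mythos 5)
Your proof is correct and follows essentially the same route as the paper: compare the unique formal power series solution of (\ref{eq1}) with initial condition (\ref{eq1c}) over $k$ (Theorem \ref{main}) against the polynomial solution over $L$ supplied by Lemma \ref{lemma1}, identify them, and evaluate at $t=1$. You are in fact more careful than the paper in two places---you first derive the hypothesis $det(Jac(F,G))\in k^*$ from (\ref{eqinv}) before invoking Theorem \ref{main}, and you explicitly invoke uniqueness over the larger field $L$ (not merely over $k$) to equate a solution in $k[X,Y][[t]]$ with one in $L[X,Y][t]$---both of which the paper leaves implicit.
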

\begin{proof}
We know from Theorem \ref{main} that the system of equations  (\ref{eq1}) with initial condition (\ref{eq1c}), has a unique solution $\mathcal{U}(t)$, $\mathcal{V}(t)$ in $k[X,Y][[t]]\subseteq L[X,Y][[t]]$. On the other hand,  Lemma \ref{lemma1} shows that the same system has a solution $\mathcal{X}(t)$, $\mathcal{Y}(t)$ in $L[X,Y][t]$. By Theorem  \ref{main}, the two solutions must be equal, so $\mathcal{U}(t)$, $\mathcal{V}(t)\in k[X,Y][t]$. In particular we get that $A(X,Y)=\mathcal{U}(1)\in k[X,Y]$ and $B(X,Y)=\mathcal{V}(1)\in k[X,Y]$.
\end{proof}
Next we give two examples for which we  compute $\mathcal{U}(t)$ and $\mathcal{V}(t)$. 
\begin{example} Suppose that $F(X,Y)=X+H(Y)$ and $G(X,Y)=Y$ where $H(Y)\in k[Y]$. Then $$\mathcal{U}(t)=X-tH(Y), \; \; \mathcal{V}(t)=Y.$$ In particular $A(X,Y)=X-H(Y)$ and $B(X,Y)=Y$. 
\end{example}

\begin{example} Suppose that $F(X,Y)=X+\frac{1}{a}(aX-bY)^n$ and $G(X,Y)=Y+\frac{1}{b}(aX-bY)^n$ where $a$, $b\in k^*$. Then 
$$\mathcal{U}(t)=X-\frac{t}{a}(aX-bY)^n, \; \; \mathcal{V}(t)=Y-\frac{t}{b}(aX-bY)^n.$$ In particular $A(X,Y)=X-\frac{1}{a}(aX-bY)^n$ and $B(X,Y)=Y-\frac{1}{b}(aX-bY)^n$. 

\end{example} 

\section{General Case}
The results from the previous section can be easily generalized to the case of polynomial in several variables. For completeness, we present without proof the  precise statements  and point out the main  differences. 

In this section we assume that $F_1, F_2, ..., F_m\in k[X_1,...,X_m]$ such that the maximum degree of the $m$-tuple $(F_1,F_2,...,F_m)$ is $n$. Just like above, we introduce the following system 
\begin{eqnarray}
&F_i(\mathcal{U}_1(t),...,\mathcal{U}_m(t))=tX_i+(1-t)F_i(X_1,...,X_m) \; \; {\rm for}\; {\rm all}\; 1\leq i\leq m,&
\label{eqm}
\end{eqnarray}
with initial conditions
\begin{eqnarray}
\mathcal{U}_i(0)=X_i \; \; {\rm for}\; {\rm all}\; 1\leq i\leq m.
\label{eqmc}
\end{eqnarray} 
\begin{theorem} Let $F_1, F_2, ..., F_m\in k[X_1,...,X_m]$  such that the determinant  of the jacobian matrix of the m-tuple $(F_1,F_2,...,F_m)$ is in $k^*$. Then the system (\ref{eqm}) with initial condition (\ref{eqmc}) has a unique solution 
$\mathcal{U}_1(t),\mathcal{U}_2(t),...,\mathcal{U}_m(t)$$\in k[X_1,X_2,...,X_m][[t]]$, with $\mathcal{U}_i(t)=X_i+\sum_{j\geq 1}u_{i,j}(X_1,...,X_m)t^j$.  \label{main2}
\end{theorem}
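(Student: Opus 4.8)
The plan is to mimic exactly the inductive power-series construction used in the proof of Theorem \ref{main}, generalizing the two-variable bookkeeping to $m$ variables. I would look for $\mathcal{U}_i(t)=\sum_{j\geq 0}u_{i,j}(X_1,\dots,X_m)t^j$ and show by induction on $n$ that the system (\ref{eqm}) with initial condition (\ref{eqmc}) has a unique solution $mod\;t^{n+1}$, with each successive solution extending the previous one. The base case is immediate: the initial condition (\ref{eqmc}) forces $u_{i,0}(X_1,\dots,X_m)=X_i$, giving a unique solution $mod\;t$.

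For the inductive step I would differentiate the system (\ref{eqm}) once and evaluate at $t=0$. Writing the first-order Jacobian matrix
\begin{eqnarray*}
D_1(X_1,\dots,X_m)=\begin{pmatrix}
\frac{\partial{F_1}}{\partial{X_1}}&...&\frac{\partial{F_1}}{\partial{X_m}}\\
.&...&.\\
\frac{\partial{F_m}}{\partial{X_1}}&...&\frac{\partial{F_m}}{\partial{X_m}}
\end{pmatrix},
\end{eqnarray*}
the first derivative at $t=0$ yields $D_1(X_1,\dots,X_m)\,(u_{1,1},\dots,u_{m,1})^{T}=(X_1-F_1,\dots,X_m-F_m)^{T}$. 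Since $det(Jac(F_1,\dots,F_m))\in k^*$, the matrix $D_1$ is invertible in $M_m(k[X_1,\dots,X_m])$, so the first-order coefficients $u_{i,1}$ are uniquely determined, establishing uniqueness $mod\;t^2$. The heart of the argument is the general step: taking the $n$-th derivative of (\ref{eqm}) produces an expression of the form
\begin{equation*}
D_1(\mathcal{U}_1(t),\dots,\mathcal{U}_m(t))Z_{1,n}(t)+\sum_{i=2}^{n}D_i(\mathcal{U}_1(t),\dots,\mathcal{U}_m(t))Z_{i,n}(t)=0,
\end{equation*}
where $D_i$ collects the order-$i$ partial derivatives of the $F_j$ and the column $Z_{i,n}$ is built recursively from lower-order derivatives of the $\mathcal{U}_j$. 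Evaluating at $t=0$, the key observation is that every entry of $Z_{i,n}(0)$ for $i\geq 2$ is a polynomial in the already-known coefficients $u_{j,\ell}$ with $\ell\leq n-1$, while $Z_{1,n}(0)=(u_{1,n},\dots,u_{m,n})^{T}$ carries the single unknown. Invertibility of $D_1$ then determines $(u_{1,n},\dots,u_{m,n})$ uniquely, completing the induction.

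The main structural difference from the case $m=2$, and the step I expect to require the most care, is the combinatorial description of the higher-order derivative tensors $D_i$ and the recursively defined columns $Z_{i,n}$. For $m$ variables the multi-indexed array of $i$-th order partial derivatives has $\binom{m+i-1}{i}$ columns rather than $i+1$, and the Faà di Bruno–type recursion governing $Z_{i,n}$ must track all mixed partials and the correct multinomial coefficients arising from repeated application of the chain rule. The essential algebraic content, however, is unchanged: the unknown top-order coefficient always enters linearly through $D_1$, and the hypothesis $det(Jac)\in k^*$ guarantees $D_1$ is invertible over $k[X_1,\dots,X_m]$, so uniqueness and existence of each successive coefficient follow exactly as before. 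Assembling these coefficients gives the claimed solution $\mathcal{U}_i(t)=X_i+\sum_{j\geq 1}u_{i,j}(X_1,\dots,X_m)t^j\in k[X_1,\dots,X_m][[t]]$.
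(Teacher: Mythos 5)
Your proposal is correct and follows essentially the same route as the paper: the authors prove Theorem \ref{main2} by declaring the argument identical to that of Theorem \ref{main}, which is precisely the inductive differentiate-and-evaluate-at-$t=0$ scheme you carry out, including the key point that $Z_{i,n}(0)$ for $i\geq 2$ involves only previously determined coefficients while the top coefficient enters linearly through the invertible matrix $D_1$. Your remark about the combinatorics of the higher-order tensors $D_i$ (with $\binom{m+i-1}{i}$ columns) is the only substantive adaptation needed, and you identify it accurately.
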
 
\begin{proof}
The proof is identical with that for Theorem \ref{main}.
\end{proof}

\begin{corollary}
 Let Let $F_1, F_2, ..., F_m\in k[X_1,...,X_m]$ of total degree at most $n$ such that the determinant  of the jacobian matrix of the $m$-tuple $(F_1,F_2,...,F_m)$ is in $k^*$.
Let $\phi,\; \tau:k[X_1,...,X_m]\to k[X_1,...,X_m]$ be morphisms of $k$-algebras determined by $$\phi(X_i)=F_i(X_1,...,X_m),$$  $$\tau(X_i)=X_i+\sum_{j=1}^{n^{m-1}} u_{i,j}(X_1,...,X_m),$$  where $u_{i,j}(X_1,...,X_m)$ are as in Theorem \ref{main2}. Then the morphism $\phi$ has a polynomial inverse if and only if $\phi\circ \tau=id_{k[X_1,...,X_m]}=\tau\circ\phi$.
\end{corollary}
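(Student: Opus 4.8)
The plan is to mirror the two-variable Corollary \ref{cor1} almost verbatim, using Theorem \ref{main2} in place of Theorem \ref{main} and the multivariable degree bound from Theorem \ref{grad} (namely $deg(\phi^{-1})\leq deg(\phi)^{m-1}\leq n^{m-1}$) in place of the $m=2$ bound. The only nontrivial direction is the forward one: assuming $\phi$ has a polynomial inverse, I must show that the explicitly truncated map $\tau$ defined by $\tau(X_i)=X_i+\sum_{j=1}^{n^{m-1}}u_{i,j}(X_1,\dots,X_m)$ actually equals $\phi^{-1}$, so that $\phi\circ\tau=\mathrm{id}=\tau\circ\phi$. The converse is immediate, since $\phi\circ\tau=\mathrm{id}=\tau\circ\phi$ exhibits $\tau$ as a two-sided inverse of $\phi$ by definition.

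For the forward direction I would first observe that a multivariable analogue of Lemma \ref{lemma1} holds: if $\phi$ is invertible with inverse given by polynomials $A_1,\dots,A_m$, then setting
\begin{eqnarray*}
\mathcal{X}_i(t)=A_i\bigl(tX_1+(1-t)F_1,\;\dots,\;tX_m+(1-t)F_m\bigr)
\end{eqnarray*}
produces a solution $(\mathcal{X}_1(t),\dots,\mathcal{X}_m(t))\in k[X_1,\dots,X_m][t]$ of the system (\ref{eqm}) satisfying the initial condition (\ref{eqmc}), with $\mathcal{X}_i(0)=X_i$ and $\mathcal{X}_i(1)=A_i$. The same computation as in Lemma \ref{lemma1} verifies the system, and the $t$-degree of each $\mathcal{X}_i(t)$ is at most $\max_i deg(A_i)=deg(\phi^{-1})\leq n^{m-1}$ by Theorem \ref{grad}.

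Next, by Theorem \ref{main2} the system (\ref{eqm}) with initial condition (\ref{eqmc}) has a \emph{unique} solution $(\mathcal{U}_1(t),\dots,\mathcal{U}_m(t))$ in $k[X_1,\dots,X_m][[t]]$, and $(\mathcal{X}_1(t),\dots,\mathcal{X}_m(t))$ is such a solution sitting inside the polynomial ring. By uniqueness, $\mathcal{U}_i(t)=\mathcal{X}_i(t)$ for each $i$; in particular each $\mathcal{U}_i(t)$ is a polynomial in $t$ of degree at most $n^{m-1}$, so the coefficients $u_{i,j}$ vanish for $j>n^{m-1}$ and the truncated sums defining $\tau$ are exactly the full expansions. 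Evaluating at $t=1$ then yields $\tau(X_i)=X_i+\sum_{j=1}^{n^{m-1}}u_{i,j}=\mathcal{U}_i(1)=\mathcal{X}_i(1)=A_i=\phi^{-1}(X_i)$ for each $i$, so $\tau=\phi^{-1}$ and hence $\phi\circ\tau=\mathrm{id}=\tau\circ\phi$.

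The step I expect to require the most care is the degree bookkeeping: I must confirm that the relevant degree bound really is $deg(\phi^{-1})\leq deg(\phi)^{m-1}\leq n^{m-1}$, matching the truncation index in the statement of $\tau$, and that this is what licenses replacing the infinite series $\mathcal{U}_i(t)$ by a finite sum with exactly $n^{m-1}$ terms. Everything else is a routine transcription of the $m=2$ argument, since the inductive solvability in Theorem \ref{main2} again rests only on the invertibility of the leading Jacobian matrix $D_1$ over $k[X_1,\dots,X_m]$.
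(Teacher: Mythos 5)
Your proposal is correct and is essentially the argument the paper intends: the paper's own proof of this corollary is a one-line remark deferring to the $m=2$ case (Corollary \ref{cor1}), whose proof you have transcribed faithfully to $m$ variables, with the truncation index $n^{m-1}$ coming from Theorem \ref{grad} exactly as you describe. Your write-up simply supplies the details the paper leaves implicit.
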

\begin{proof} Notice that in this case we have to take the first $n^{m-1}$ terms from the expression of $\mathcal{U}_i(t)$. When $m=2$ we recover Corollary \ref{cor1}.
\end{proof}

\begin{corollary} Let $k\subseteq L$ an extension of fields of characteristic $0$ and $F_1, F_2, ..., F_m\in k[X_1,...,X_m]$. Assume that  for all $1\leq i\leq m$ there  exist polynomials $A_i(X_1,...,X_m)\in L[X_1,...,X_m]$ such that 
$$F_i(A_1(X_1,....,X_m),..., A_m(X_1,....,X_m))=X_i$$ 
$$A_i(F_1(X_1,....,X_m),..., F_m(X_1,....,X_m))=X_i$$ 
for all $1\leq i\leq m$. Then $A_i(X_1,...,X_m)\in k[X_1,...,X_m]$ for all $1\leq i\leq m$. 
\end{corollary}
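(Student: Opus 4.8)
The plan is to follow the strategy of Corollary~\ref{kL} verbatim, now in $m$ variables, replacing Theorem~\ref{main} by Theorem~\ref{main2} and Lemma~\ref{lemma1} by its obvious several-variable analogue. The whole argument rests on the uniqueness clause of Theorem~\ref{main2}: I will produce the unique power-series solution over $k$ together with an a priori different \emph{polynomial} solution over $L$, and then uniqueness will force the two to coincide.

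First I would check that the hypotheses of Theorem~\ref{main2} are actually met, i.e. that $\det(Jac(F_1,\dots,F_m))\in k^*$. This is the one point that is not completely formal, and I expect it to be the main obstacle. The existence of the $A_i\in L[X_1,\dots,X_m]$ satisfying both families of composition identities says precisely that the $L$-algebra endomorphism sending $X_i\mapsto F_i$ is an automorphism of $L[X_1,\dots,X_m]$. Applying the chain rule to $F_i(A_1,\dots,A_m)=X_i$, the product of the Jacobian of $(F_1,\dots,F_m)$ with that of $(A_1,\dots,A_m)$ is the identity matrix, so $\det(Jac(F_1,\dots,F_m))$ is a unit in $L[X_1,\dots,X_m]$ and hence a nonzero constant. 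Since the $F_i$ lie in $k[X_1,\dots,X_m]$, this determinant already lies in $k[X_1,\dots,X_m]$; being a nonzero constant, it therefore lies in $k^*$. Once this is cleared, Theorem~\ref{main2} applies over $k$.

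Next I would invoke Theorem~\ref{main2}: the system (\ref{eqm}) with initial condition (\ref{eqmc}) has a unique solution $\mathcal{U}_1(t),\dots,\mathcal{U}_m(t)\in k[X_1,\dots,X_m][[t]]$, which we view inside $L[X_1,\dots,X_m][[t]]$. On the other hand, the several-variable version of Lemma~\ref{lemma1} provides an explicit solution over $L$: setting
$$\mathcal{X}_i(t)=A_i\bigl(tX_1+(1-t)F_1,\dots,tX_m+(1-t)F_m\bigr)\in L[X_1,\dots,X_m][t],$$
the identities $F_j(A_1,\dots,A_m)=X_j$ give $F_j(\mathcal{X}_1(t),\dots,\mathcal{X}_m(t))=tX_j+(1-t)F_j$, while the identities $A_i(F_1,\dots,F_m)=X_i$ give $\mathcal{X}_i(0)=X_i$. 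Thus $(\mathcal{X}_1(t),\dots,\mathcal{X}_m(t))$ solves (\ref{eqm})--(\ref{eqmc}) over $L$ and is polynomial in $t$.

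Finally, the uniqueness in Theorem~\ref{main2}, read over $L$, forces $\mathcal{U}_i(t)=\mathcal{X}_i(t)$ for all $i$. Since $\mathcal{U}_i(t)\in k[X_1,\dots,X_m][[t]]$ and equals the $t$-polynomial $\mathcal{X}_i(t)$, each $\mathcal{X}_i(t)$ in fact has coefficients in $k[X_1,\dots,X_m]$ and finite $t$-degree. Evaluating at $t=1$ then yields $A_i(X_1,\dots,X_m)=\mathcal{X}_i(1)=\mathcal{U}_i(1)\in k[X_1,\dots,X_m]$ for every $i$, which is exactly the assertion.
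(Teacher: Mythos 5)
Your proposal is correct and follows essentially the same route as the paper: the paper proves the two-variable version (Corollary \ref{kL}) by playing the unique formal-series solution of Theorem \ref{main} over $k$ against the explicit polynomial solution from Lemma \ref{lemma1} over $L$, and declares the $m$-variable case analogous via Theorem \ref{main2}. Your preliminary verification that $\det(Jac(F_1,\dots,F_m))\in k^*$ is a welcome addition that the paper leaves implicit, but it does not change the argument.
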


\begin{proof} The proof is similar with that for Corollary \ref{kL}.
\end{proof}
We end the paper with the following  straightforward generalization for Theorem \ref{main2}.
\begin{remark} Take $F_1, F_2, ..., F_m\in k[X_1,...,X_m]$ as in Theorem \ref{main2}, and  $P_1,$ $,...,$$P_m\in k[X_1,...,X_m]$. We define the following equation 
\begin{eqnarray}
F_i(\mathcal{W}_1(t),...,\mathcal{W}_m(t))=tX_i+(1-t)F_i(P_1,...,P_m)
\label{eqm3}
\end{eqnarray}
 for all $1\leq i\leq m$, with the initial conditions:
\begin{eqnarray}
\mathcal{W}_i(0)=P_i
\label{eqmc2}
\end{eqnarray}
for all $1\leq i\leq m$. Just like in Theorem (\ref{main2}), one can show the existence of a unique solution $\mathcal{W}_i(t)\in k[X_1,X_2,...,X_m][[t]]$ for the equation \ref{eqm3} with initial condition (\ref{eqmc2}).
\end{remark}




\bibliographystyle{amsalpha}

\end{document}